\theoremstyle{plain}
\newtheorem{theorem}{Theorem}
\newtheorem{corollary}{Corollary}
\newtheorem{lemma}{Lemma}
\theoremstyle{remark}
\newtheorem{remark}{Remark}
\numberwithin{equation}{section}
\title{Digital expansions with negative real bases}
\author{Wolfgang Steiner}
\address{LIAFA, CNRS UMR 7089, Universit\'e Paris Diderot -- Paris 7,
Case 7014, 75205 Paris Cedex 13, FRANCE}
\email{steiner@liafa.jussieu.fr}
\thanks{Part of this research was conducted while the author was visiting academic at the Department of Computing of the Macquarie University, Sydney.}
\begin{document}
\begin{abstract}
Similarly to Parry's characterization of $\beta$-expansions of real numbers in real bases $\beta > 1$, Ito and Sadahiro characterized digital expansions in negative bases, by the expansions of the endpoints of the fundamental interval.
Parry also described the possible expansions of $1$ in base $\beta > 1$.
In the same vein, we characterize the sequences that occur as $(-\beta)$-expansion of $\frac{-\beta}{\beta+1}$ for some $\beta > 1$. 
These sequences also describe the itineraries of $1$ by linear mod one transformations with negative slope.
\end{abstract}
\maketitle

\section{Introduction}
Digital expansions in real bases $\beta > 1$ were introduced by R\'enyi~\cite{Renyi57}: 
The (greedy) \emph{$\beta$-expansion} of a real number $x \in [0,1)$ is 
\[
x = \frac{\varepsilon_1(x)}{\beta} + \frac{\varepsilon_2(x)}{\beta^2} + \cdots \quad \mbox{with} \quad  \varepsilon_n(x) = \lfloor \beta\, T_\beta^{n-1}(x) \rfloor,
\]
where $\lfloor \cdot \rfloor$ denotes the floor function and $T_\beta$ is the \emph{$\beta$-transformation}
\[
T_\beta:\, [0,1) \to [0,1),\quad x \mapsto \beta x - \lfloor \beta x \rfloor\,.
\]
R\'enyi suggested representing arbitrary $x \in \mathbb{R}$ by
\[
x = \lfloor x \rfloor + \frac{\varepsilon_1(\lfloor x\rfloor)}{\beta} + \frac{\varepsilon_2(\lfloor x\rfloor)}{\beta^2} + \cdots,
\]
whereas nowadays it is more usual (for $x \ge 0$) to multiply the $\beta$-expansion of $x \beta^{-k}$ by~$\beta^k$, with $k$ an arbitrary integer satisfying $x \beta^{-k} \in [0,1)$. 
Anyway, the possible expansions can be described by those of $x \in [0,1)$. 
A~sequence $b_1 b_2 \cdots$ is called \emph{$\beta$-admissible} if and only if it is (the digit sequence of) the $\beta$-expansion of a number $x \in [0,1)$, i.e., $b_n = \varepsilon_n(x)$ for all $n \ge 1$.
Parry~\cite{Parry60} showed that an integer sequence $b_1 b_2 \cdots$ is $\beta$-admissible if and only~if 
\[
0 0 \cdots \le_\mathrm{lex} b_k b_{k+1} \cdots <_\mathrm{lex} a_1 a_2 \cdots \quad \mbox{for all}\ k \ge 1,
\]
where $<_\mathrm{lex}$ denotes the lexicographic order and $a_1 a_2 \cdots$ is the (quasi-greedy) \emph{$\beta$-expansion of~$1$}, i.e., $a_n = \lim_{x\to1-} \varepsilon_n(x)$.
Moreover, a sequence of integers $a_1 a_2 \cdots$ is the (quasi-greedy) $\beta$-expansion of~$1$ for some $\beta > 1$ if and only if 
\[
0 0 \cdots <_\mathrm{lex} a_k a_{k+1} \cdots \le_\mathrm{lex} a_1 a_2 \cdots \quad \mbox{for all}\ k \ge 2.
\]
(These results are stated in a slightly different way in~\cite{Parry60}.)

Following \cite{Renyi57} and \cite{Parry60}, a lot of papers were dedicated to the study of $\beta$-expansions and $\beta$-transformations, but surprisingly little attention was given to digital expansions in negative bases. 
This changed only in recent years, after Ito and Sadahiro \cite{Ito-Sadahiro09} considered \emph{$(-\beta)$-expansions}, $\beta > 1$, defined for $x \in \big[\frac{-\beta}{\beta+1}, \frac{1}{\beta+1}\big)$ by
\begin{equation} \label{e:epsilon}
x = \frac{\varepsilon_1(x)}{-\beta} + \frac{\varepsilon_2(x)}{(-\beta)^2} + \cdots \quad \mbox{with} \quad  \varepsilon_n(x) = \big\lfloor \tfrac{\beta}{\beta+1} - \beta\, T_{-\beta}^{n-1}(x) \big\rfloor,
\end{equation}
where the \emph{$(-\beta)$-transformation} is defined by
\[
T_{-\beta}:\, \big[\tfrac{-\beta}{\beta+1}, \tfrac{1}{\beta+1}\big) \to \big[\tfrac{-\beta}{\beta+1}, \tfrac{1}{\beta+1}\big),\quad x \mapsto -\beta x - \big\lfloor \tfrac{\beta}{\beta+1} - \beta x \big\rfloor.
\]
A~sequence $b_1 b_2 \cdots$ is \emph{$(-\beta)$-admissible} if and only if it is the $(-\beta)$-expansion of some $x \in \big[\frac{-\beta}{\beta+1}, \frac{1}{\beta+1}\big)$, i.e., $b_n = \varepsilon_n(x)$ for all $n \ge 1$.
Since the map $x \mapsto -\beta x$ is order-reversing, the $(-\beta)$-admissible sequences are characterized using the alternating lexicographic order. 
By \cite{Ito-Sadahiro09}, a sequence $b_1 b_2 \cdots$ is $(-\beta)$-admissible if and only if
\begin{equation} \label{e:IS1}
a_1 a_2 \cdots \ge_\mathrm{alt} b_k b_{k+1} \cdots >_\mathrm{alt} 0 a_1 a_2 \cdots\quad \mbox{for all}\ k \ge 1,
\end{equation}
where $a_1 a_2 \cdots$ is the $(-\beta)$-expansion of the left endpoint~$\frac{-\beta}{\beta+1}$, i.e., $a_n = \varepsilon_n\big(\frac{-\beta}{\beta+1}\big)$, which is supposed not to be periodic with odd period length.
If $a_1 a_2 \cdots = \overline{a_1 a_2 \cdots a_{2\ell+1}}$ for some $\ell \ge 0$, and $\ell$ is minimal with this property, then the condition \eqref{e:IS1} is replaced by
\begin{equation} \label{e:IS2}
a_1 a_2 \cdots \ge_\mathrm{alt} b_k b_{k+1} \cdots >_\mathrm{alt} \overline{0 a_1 \cdots a_{2\ell} (a_{2\ell+1}\!-\!1)} \quad \mbox{for all}\ k \ge 1.
\end{equation}
Recall that the alternating lexicographic order is defined on sequences $x_1 x_2 \cdots$, $y_1 y_2 \cdots$ with $x_1 \cdots x_{k-1} = y_1 \cdots y_{k-1}$ and $x_k \ne y_k$ by 
\[
x_1 x_2 \cdots <_\mathrm{alt} y_1 y_2 \cdots \quad \mbox{if and only if} \quad \begin{cases}x_k < y_k & \mbox{when $k$ is odd}, \\[.5ex] y_k < x_k & \mbox{when $k$ is even}.\end{cases}
\]

The main result of this paper is a characterization of the sequences $a_1 a_2 \cdots$ that are the $(-\beta)$-expansion of~$\frac{-\beta}{\beta+1}$ for some $\beta > 1$.
This turns out to be more complicated than the corresponding problem for $\beta$-expansions, and we will see that several proofs cannot be directly carried over from positive to negative bases.
From~\eqref{e:IS1} and~\eqref{e:IS2}, one deduces~that 
\begin{equation} \label{e:gora}
a_k a_{k+1} \cdots \le_\mathrm{alt} a_1 a_2 \cdots \quad \mbox{for all}\ k \ge 2.
\end{equation}
The proof of Proposition~3.5 in \cite{Liao-Steiner} (see also Theorem~\ref{t:order} below) shows that 
\begin{equation} \label{e:LS}
a_1 a_2 \cdots >_\mathrm{alt} u_1 u_2 \cdots = 1 0 0 1 1 1 0 0 1 0 0 1 0 0 1 1 1 0 0 1 1 \cdots,
\end{equation}
where $u_1 u_2 \cdots$ is the sequence starting with $\varphi^n(1)$ for all $n \ge 0$, with $\varphi$ being the morphism of words on the alphabet $\{0,1\}$  defined by $\varphi(1) = 100$, $\varphi(0) = 1$.
(See the remarks following Theorem~\ref{t:order} and note that the alphabet is shifted by~$1$ in~\cite{Liao-Steiner}.)
Our first result states that a sequence satisfying \eqref{e:gora} and~\eqref{e:LS} is ``almost'' the $(-\beta)$-expansion of~$\frac{-\beta}{\beta+1}$ for some~$\beta > 1$.

\begin{theorem} \label{t:gora}
Let $a_1 a_2 \cdots$ be a sequence of non-negative integers satisfying \eqref{e:gora} and~\eqref{e:LS}. 
Then there exists a unique $\beta > 1$ such that 
\begin{equation} \label{e:closedinterval}
\sum_{j=1}^\infty \frac{a_j}{(-\beta)^j} = \frac{-\beta}{\beta+1} \quad \mbox{and} \quad \sum_{j=1}^\infty \frac{a_{k+j}}{(-\beta)^j} \in \bigg[\frac{-\beta}{\beta+1}, \frac{1}{\beta+1}\bigg] \quad \mbox{for all}\ k \ge 1. 
\end{equation}
\end{theorem}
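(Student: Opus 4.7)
The plan is to reduce the theorem to finding a unique zero on $(1,\infty)$ of the continuous function
\[
F(\beta) \;:=\; \sum_{j=1}^\infty \frac{a_j}{(-\beta)^j} + \frac{\beta}{\beta+1},
\]
and then to read off the closed-interval condition of \eqref{e:closedinterval} almost mechanically from \eqref{e:gora}. Condition \eqref{e:gora}, compared at the first (odd) position, forces $a_k \le a_1$ for every $k \ge 2$, so the series defining $F$ converges absolutely on $(1,\infty)$, the function $F$ is continuous, and $F(\beta) \to 1$ as $\beta \to \infty$.

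For existence I would use the intermediate value theorem, and all that is needed is a base $\beta_0 > 1$ with $F(\beta_0) \le 0$. This is precisely the role of \eqref{e:LS}: by Theorem~\ref{t:order}, the sequence $u$ corresponds to a specific $\beta_u > 1$ at which $u$ represents the left endpoint $-\beta_u/(\beta_u+1)$ in base $-\beta_u$ (or is its quasi-greedy limit). The basic translation principle — for sufficiently controlled sequences of non-negative integers, $x <_{\mathrm{alt}} y$ implies $\sum x_j/(-\beta)^j > \sum y_j/(-\beta)^j$, verified by induction on the first index of disagreement — then turns the hypothesis $a >_{\mathrm{alt}} u$ into $\sum a_j/(-\beta_u)^j < -\beta_u/(\beta_u+1)$, that is, $F(\beta_u) < 0$. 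The intermediate value theorem now produces some $\beta \in (\beta_u, \infty)$ with $F(\beta)=0$.

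Fix such a $\beta$ and write $[b_1 b_2 \cdots] := \sum_{j \ge 1} b_j/(-\beta)^j$, so that $[a] = -\beta/(\beta+1)$. The closed-interval half of \eqref{e:closedinterval} follows directly from \eqref{e:gora} via the same translation principle. The lower bound $[\sigma^k(a)] \ge -\beta/(\beta+1) = [a]$ for $k \ge 1$ is simply the translation of $\sigma^k(a) \le_{\mathrm{alt}} a$, i.e., \eqref{e:gora}. For the upper bound, a one-line computation gives $[0 a] = -[a]/\beta = 1/(\beta+1)$, so it is enough to verify $\sigma^k(a) \ge_{\mathrm{alt}} 0 a$ for every $k \ge 1$. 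If $a_{k+1} \ge 1$ this holds at the (odd) first position. If $a_{k+1} = 0$, both sequences start with $0$; since alt-lex order reverses when a common leading symbol is prepended or stripped, the inequality becomes equivalent to $\sigma^{k+1}(a) \le_{\mathrm{alt}} a$, which is once again \eqref{e:gora}.

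The principal obstacle is uniqueness, since $F$ is not manifestly monotone in $\beta$. I would argue by contradiction: suppose $1 < \beta_1 < \beta_2$ both satisfy \eqref{e:closedinterval}. At each $\beta_i$ the sequence $a$ must coincide with the Ito-Sadahiro itinerary of $-\beta_i/(\beta_i+1)$, so by \eqref{e:epsilon} every digit $a_k$ equals $\lfloor \beta_i/(\beta_i+1) - \beta_i\, T_{-\beta_i}^{k-1}(-\beta_i/(\beta_i+1)) \rfloor$. Deforming $\beta$ continuously from $\beta_1$ to $\beta_2$ while using the closed-interval condition to keep the orbit trapped in $[-\beta/(\beta+1), 1/(\beta+1)]$, and exploiting the strict monotonicity in $\beta$ of the relevant thresholds, one should locate an index $k$ at which the digit forced by \eqref{e:epsilon} must jump — the required contradiction. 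This is the step that I expect to most heavily exploit the combinatorial sharpness of \eqref{e:LS} and Theorem~\ref{t:order}.
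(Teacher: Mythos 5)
Your argument has a genuine gap at its center: the ``translation principle'' you invoke --- that $x_1x_2\cdots <_{\mathrm{alt}} y_1y_2\cdots$ implies $\sum_j x_j(-\beta)^{-j} > \sum_j y_j(-\beta)^{-j}$ --- is simply false for general sequences of non-negative integers. For instance, at $\beta=2$ we have $0\,0\,9\,0\,0\cdots <_{\mathrm{alt}} 1\,0\,0\cdots$ while the values are $-9/8 < -1/2$; and even with digits bounded by $\lfloor\beta\rfloor$ it fails for $\beta$ close to $1$ (take $\beta=1.2$, $x=0\,0\,1\,0\,1\,0\cdots$, $y=1\,0\,0\cdots$). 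The principle is valid only for sequences whose tails are already known to represent points of the fundamental interval, i.e.\ essentially for admissible sequences --- but that is exactly the conclusion $\sum_j a_{k+j}(-\beta)^{-j}\in\big[\frac{-\beta}{\beta+1},\frac{1}{\beta+1}\big]$ you are trying to prove, so deducing the closed-interval half of \eqref{e:closedinterval} ``directly from \eqref{e:gora}'' at an arbitrary root of $F$ is circular. The same problem undermines your existence step, which in addition rests on a false premise: there is no $\beta_u>1$ whose $(-\beta_u)$-expansion of the left endpoint is $u_1u_2\cdots$; the sequence $u$ is the limiting itinerary as $\beta\to 1^{+}$ (in the paper, $a_{[1,n]}=u_{[1,n]}$ is precisely the case $\inf J_n=1$), so Theorem~\ref{t:order} provides no such base and your intermediate-value argument has no anchor. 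This is exactly why the paper does not first solve $F(\beta)=0$ and then check the tails, but instead builds $\beta$ as a point of the nested sets $J_n=\{x>1: P_j(x)\in[0,1],\ 1\le j\le n\}$, proving by a delicate induction (statements (1)--(5), together with the non-monotonicity issue of Remark~\ref{r:order} and the Liao--Steiner estimates) that each $J_n$ is a non-empty interval; enforcing all tail constraints simultaneously is the real content of the theorem.

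The uniqueness sketch also contains a false step: a $\beta$ satisfying \eqref{e:closedinterval} does \emph{not} force $a_1a_2\cdots$ to be the Ito--Sadahiro itinerary of $\frac{-\beta}{\beta+1}$, because tails are allowed to equal the excluded right endpoint $\frac{1}{\beta+1}$; the paper's own example shows that every sequence in $\{2,\,1\,0\}^\omega$ satisfies \eqref{e:closedinterval} with $\beta=2$, yet only $\overline{2}$ is the itinerary --- this is precisely the gap between Theorem~\ref{t:gora} and Theorem~\ref{t:char}. Beyond that, ``deforming $\beta$ continuously and locating a digit jump'' is a hope rather than an argument. The paper obtains uniqueness differently: once each $J_n$ is known to be an interval (the hard case $P_n(\beta)=P_n(\beta')\in\{0,1\}$ with $\beta\ne\beta'$ is excluded via the numbers $\gamma_m,\eta_m$ of Liao--Steiner), a positive-length intersection $\bigcap_n J_n$ would give two distinct bases for which $a_1a_2\cdots$ is a genuine $(-\beta)$-expansion of the left endpoint, contradicting Theorem~\ref{t:order}. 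So as written, the existence argument, the verification of the interval condition, and the uniqueness argument all have essential gaps.
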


For a $(-\beta)$-expansion of $\frac{-\beta}{\beta+1}$, we have to exclude the possibility that $\sum_{j=1}^\infty \frac{a_{k+j}}{(-\beta)^j} = \frac{1}{\beta+1}$ for some $k \ge 1$. 
If $\overline{a_1 \cdots a_k} >_\mathrm{alt} u_1 u_2 \cdots$, then out of $\{a_1 \cdots a_k,\, a_1 \cdots a_{k-1} ({a_k\!-\!1}) 0\}^\omega$, which is the set of infinite sequences composed of blocks $a_1 \cdots a_k$ and $a_1 \cdots a_{k-1} ({a_k\!-\!1}) 0$, only the periodic sequence $\overline{a_1 \cdots a_k}$ is possibly the $(-\beta)$-expansion of~$\frac{-\beta}{\beta+1}$ for some $\beta > 1$, see Section~\ref{sec:proof-theor-reft:ch}.
This implies that 
\begin{align} 
& a_1 a_2 \cdots \not\in \{a_1 \cdots a_k,\, a_1 \cdots a_{k-1} ({a_k\!-\!1}) 0\}^\omega \setminus \{\overline{a_1 \cdots a_k}\} \label{e:ak1} \\ 
& \hspace{12em} \mbox{for all}\ k \ge 1\ \mbox{with}\ \overline{a_1 \cdots a_k} \succ u_1 u_2 \cdots, \nonumber \\
& a_1 a_2 \cdots \not\in \{a_1 \cdots a_k 0,\, a_1 \cdots a_{k-1} ({a_k\!+\!1})\}^\omega \label{e:ak2} \\
& \hspace{12em} \mbox{for all}\ k \ge 1\ \mbox{with}\ \overline{a_1 \cdots a_{k-1} ({a_k\!+\!1})} \succ u_1 u_2 \cdots. \nonumber
\end{align}
The main result states that there are no other conditions on $a_1 a_2 \cdots$. 

\begin{theorem} \label{t:char}
A~sequence of non-negative integers $a_1 a_2 \cdots$ is the $(-\beta)$-expansion of~$\frac{-\beta}{\beta+1}$ for some (unique) $\beta > 1$ if and only if it satisfies \eqref{e:gora}, \eqref{e:LS}, \eqref{e:ak1}, and~\eqref{e:ak2}.
\end{theorem}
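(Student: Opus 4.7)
Conditions \eqref{e:gora} and \eqref{e:LS} are derived in the excerpt. For \eqref{e:ak1}, I will set $w = a_1 \cdots a_k$ and $w' = a_1 \cdots a_{k-1}(a_k - 1) 0$, and first verify algebraically that $\overline{w}$ and $\overline{w'}$ represent $\frac{-\beta}{\beta+1}$ at the same $\beta$, and that reading $w'$ from the state $y = \frac{-\beta}{\beta+1}$ produces $y = \frac{1}{\beta+1}$ at step $k$ (since replacing $a_k$ by $a_k - 1$ shifts $y_k$ by $+1$) before returning to $\frac{-\beta}{\beta+1}$ after the trailing $0$. It then follows that any sequence in $\{w, w'\}^\omega$ using $w'$ at least once has some $y_n = \frac{1}{\beta+1}$, contradicting greediness; this gives \eqref{e:ak1} by contraposition. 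The proof of \eqref{e:ak2} will be symmetric, with the roles of the two blocks swapped.

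\textbf{Sufficiency.} The plan is to apply Theorem~\ref{t:gora} to obtain the unique $\beta > 1$ from \eqref{e:closedinterval} and then show $y_n := \sum_{j \geq 1} a_{n+j}/(-\beta)^j \neq \frac{1}{\beta+1}$ for all $n \geq 1$. I will argue by contradiction: suppose $y_k = \frac{1}{\beta+1}$ for some $k$. Admissibility of $y_{k+1} = -\beta y_k - a_{k+1}$ with $a_{k+1} \in \mathbb{Z}_{\geq 0}$ immediately forces $a_{k+1} = 0$ and $y_{k+1} = \frac{-\beta}{\beta+1}$. A step-wise analysis of the length-$1$ admissibility interval for each digit will then show that a non-greedy choice is available only at states $y = \frac{-\beta}{\beta+1}$ and necessarily forces the subsequent digit to be $0$; iterating, the greedy $(-\beta)$-expansion of $\frac{-\beta}{\beta+1}$ must be a periodic word $\overline{c_1 \cdots c_p}$ of minimal period $p$, and the candidate lies in $\{c_1 \cdots c_p,\, c_1 \cdots c_{p-1}(c_p - 1)0\}^\omega \setminus \{\overline{c_1 \cdots c_p}\}$. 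Two cases, distinguished by the $p$-th digit, will then finish the argument: if $a_p = c_p$, the block-system equals $\{a_1 \cdots a_p,\, a_1 \cdots a_{p-1}(a_p - 1)0\}^\omega$, contradicting \eqref{e:ak1} at $k = p$ (using $\overline{a_1 \cdots a_p} = \overline{c_1 \cdots c_p} \succ u_1 u_2 \cdots$, which holds since the greedy expansion satisfies \eqref{e:LS}); if $a_p = c_p - 1$ and $a_{p+1} = 0$, it equals $\{a_1 \cdots a_p 0,\, a_1 \cdots a_{p-1}(a_p + 1)\}^\omega$, contradicting \eqref{e:ak2} at $k = p$ by the same reasoning applied to $\overline{a_1 \cdots a_{p-1}(a_p + 1)} = \overline{c_1 \cdots c_p}$.

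\textbf{Main obstacle.} The hard part will be the structural classification of non-greedy candidates used in sufficiency. Theorem~\ref{t:gora} does not grant uniqueness of admissible sequences for fixed $\beta$, so one cannot directly conclude $a_{k+2} a_{k+3} \cdots = a_1 a_2 \cdots$ from $y_{k+1} = \frac{-\beta}{\beta+1}$. My approach will be a careful case analysis of the admissible-digit interval at each step, showing that non-greedy choices arise only at states $y = \frac{-\beta}{\beta+1}$ and only when the greedy trajectory returns to $\frac{-\beta}{\beta+1}$ after finitely many steps (i.e., the greedy expansion is periodic). Iterating this branching analysis will produce the two-block structure that lets me invoke \eqref{e:ak1} or \eqref{e:ak2}.
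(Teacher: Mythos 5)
Your sufficiency argument is essentially the paper's: take the first index where a tail value $y_\ell$ hits an endpoint of the closed interval, show by the digit-by-digit admissibility analysis that the greedy $(-\beta)$-expansion of $\frac{-\beta}{\beta+1}$ is then periodic, that $a_1a_2\cdots$ decomposes into the two blocks, and contradict \eqref{e:ak1} or \eqref{e:ak2} according to whether the first deviating digit is the greedy one or the greedy one minus $1$. (One small slip: the extra digit choice is available at the states that map onto $\frac{-\beta}{\beta+1}$, i.e.\ one step \emph{before} the return, not at $\frac{-\beta}{\beta+1}$ itself; the forced $0$ comes after landing on $\frac{1}{\beta+1}$.) This half, suitably written out, matches Section~\ref{sec:proof-theor-reft:ch}.

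The necessity of \eqref{e:ak1} and \eqref{e:ak2} is where you have a genuine gap. Your computation ``reading $w'$ from the state $\frac{-\beta}{\beta+1}$ reaches $\frac{1}{\beta+1}$ at step $k$ and returns to $\frac{-\beta}{\beta+1}$'' describes the actual orbit only if the tail value at the start of each block equals $\frac{-\beta}{\beta+1}$, i.e.\ only if $\overline{a_1\cdots a_k}$ evaluates to $\frac{-\beta}{\beta+1}$ \emph{at the base $\beta$ of the assumed expansion}, equivalently $T_{-\beta}^k\big(\frac{-\beta}{\beta+1}\big)=\frac{-\beta}{\beta+1}$. But under your contrapositive hypothesis the expansion lies in $\{w,w'\}^\omega\setminus\{\overline{w}\}$, so it is not $\overline{w}$, hence $T_{-\beta}^k\big(\frac{-\beta}{\beta+1}\big)\ne\frac{-\beta}{\beta+1}$ and the state at the start of the second block is not $\frac{-\beta}{\beta+1}$: the ``algebraic verification that $\overline{w}$ and $\overline{w'}$ represent $\frac{-\beta}{\beta+1}$ at the same $\beta$'' only says that the two fixed-point equations have the same roots, not that the actual $\beta$ is one of them, so ``it then follows'' does not follow. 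This identification is precisely the nontrivial content of the paper's argument: one shows that the periodic word $\overline{a_{[1,k]}}$ (resp.\ $\overline{a_{[1,k)}(a_k\!+\!1)}$) itself satisfies \eqref{e:gora} (a combinatorial argument using $a_{[k+1,2k)}=a_{[1,k)}$) and \eqref{e:LS} (this is where the hypothesis $\succ u_1u_2\cdots$ is actually needed), applies Theorem~\ref{t:gora} to this periodic word to get some $\beta'$, checks that $\beta'$ also satisfies \eqref{e:closedinterval} for the original sequence via the block structure, and then uses the \emph{uniqueness} in Theorem~\ref{t:gora} to conclude $\beta'=\beta$; only after that does one know that some tail of $a_1a_2\cdots$ equals $\frac{1}{\beta+1}$, whence it is not the expansion. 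Your sketch omits all of these steps (and also the separate degenerate case $a_1a_2\cdots=\overline{a_{[1,k]}\,0}$ in the \eqref{e:ak2} part), so the necessity half as proposed is incomplete.
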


It is easy to see that the natural order of bases $\beta > 1$ is reflected by the lexicographical order of the (quasi-greedy) $\beta$-expansions of~$1$~\cite{Parry60}.
For negative bases, a similar relation with the alternating lexicographic order holds, although it is a bit harder to prove.

\begin{theorem} \label{t:order}
Let $a_1 a_2 \cdots$ be the $(-\beta)$-expansion of $\frac{-\beta}{\beta+1}$ and $a'_1 a'_2 \cdots$ be the $(-\beta')$-expansion of $\frac{-\beta'}{\beta'+1}$, with $\beta, \beta' > 1$.
Then $\beta < \beta'$ if and only if $a_1 a_2 \cdots <_\mathrm{alt} a'_1 a'_2 \cdots$.
\end{theorem}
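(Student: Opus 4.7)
The plan is to reduce the biconditional to a single monotonicity statement about a scalar function of the base, and to attack that by a derivative computation.

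First, Theorem~\ref{t:gora} (equivalently, Theorem~\ref{t:char}) asserts that the assignment $\beta \mapsto a_1 a_2 \cdots$ is injective on $(1,\infty)$. Together with the trichotomy of the orders $<$ and $<_\mathrm{alt}$, it therefore suffices to establish only the forward implication ``$\beta < \beta' \Rightarrow a_1 a_2 \cdots <_\mathrm{alt} a'_1 a'_2 \cdots$''; the converse then follows by applying the forward direction with the roles of $\beta$ and $\beta'$ exchanged.

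Next I would translate the alternating-lexicographic comparison into an inequality of real numbers via the evaluation $\pi_t(b) := \sum_{j \ge 1} b_j/(-t)^j$. At the first disagreement the contribution $(b_k - b'_k)/(-t)^k$ has a sign that, in both parity cases, is dictated precisely by $<_\mathrm{alt}$ and dominates the geometric tail, so $\pi_t$ is strictly order-reversing with respect to $<_\mathrm{alt}$. Hence $a <_\mathrm{alt} a'$ is equivalent to $\pi_{\beta'}(a) > \pi_{\beta'}(a') = -\beta'/(\beta'+1)$. Introducing
\[
g(t) := \pi_t(a) + \frac{t}{t+1}, \qquad t > 1,
\]
with $a = a_1 a_2 \cdots$ fixed, the hypothesis that $a$ is the $(-\beta)$-expansion of $-\beta/(\beta+1)$ reads $g(\beta) = 0$, and the task becomes to prove $g(\beta') > 0$ whenever $\beta' > \beta$.

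I would prove this by showing that $g$ is strictly increasing on $(1,\infty)$. Termwise differentiation together with the identity $\sum_{j \ge 1} j/(-t)^{j+1} = 1/(t+1)^2$ (which one obtains either by applying the quotient rule to $t/(t+1)$ or by differentiating the geometric sum $\sum_{j\ge 0}(-1/t)^j = t/(t+1)$) collapses the derivative to the very compact form
\[
g'(t) = \sum_{j \ge 1} \frac{j(a_j+1)}{(-t)^{j+1}}.
\]
The main obstacle is that the summands alternate in sign, so positivity is not visible from the formula alone. My planned attack is to pair successive summands $j = 2\ell-1,\, 2\ell$, using the shift-dominance inequality \eqref{e:gora} (i.e.\ $\sigma^k a \le_\mathrm{alt} a$) to control each $a_{2\ell}$ from above by $a_1,\ldots,a_{2\ell-1}$, and combining it with the strict lower bound \eqref{e:LS}: the Fibonacci-like sequence $u_1 u_2 \cdots$ is exactly the critical case at which this pairing estimate becomes tight, which is strong evidence that \eqref{e:LS} is the sharp quantitative hypothesis needed to complete the argument. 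Once $g' > 0$ is established, integration from $\beta$ to $\beta'$ gives $g(\beta') > g(\beta) = 0$, which via the order-reversing property of $\pi_{\beta'}$ yields $a <_\mathrm{alt} a'$, completing the proof.
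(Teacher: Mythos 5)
Your reduction of the biconditional to one implication is fine (indeed it needs only that the expansion is a function of $\beta$, not injectivity -- which is fortunate, because citing Theorem~\ref{t:gora} or~\ref{t:char} here would be circular relative to the paper, where their proofs invoke Theorem~\ref{t:order}). The fatal problem is the core claim that $g(t)=\sum_{j\ge1}a_j/(-t)^j+\tfrac{t}{t+1}$ is strictly increasing on $(1,\infty)$: this is false for genuine expansion sequences. Take $a=100\overline{1}$, which is the $(-\beta)$-expansion of $\tfrac{-\beta}{\beta+1}$ for the root $\beta\approx1.3247$ of $x^3=x+1$ (it is $\varphi(1)\,\overline{\varphi(0)}$, the case $m=2$ of \cite[Theorem~2.5]{Liao-Steiner}), so it satisfies \eqref{e:gora} and \eqref{e:LS}. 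Here $g(t)=-\tfrac1t+\tfrac{1}{t^3(t+1)}+\tfrac{t}{t+1}$ and your formula gives $g'(t)=\tfrac{2t-3}{t^4}+\tfrac{2}{(t+1)^2}$, which is negative for all $t$ between $1$ and roughly $1.13$ (e.g.\ $g'(1.05)\approx-0.26$). So no pairing argument based on \eqref{e:gora} and \eqref{e:LS} can yield $g'>0$ on $(1,\infty)$; this is exactly the loss of monotonicity for bases below the golden ratio that Remark~\ref{r:order} isolates as the flaw in G\'ora's proof and as the reason Theorems~\ref{t:gora} and~\ref{t:order} are harder than their positive-base analogues. Salvaging your plan by proving $g'>0$ only on $[\beta,\infty)$ would require quantitative information tying $\beta$ to the sequence $a$ that the proposal does not supply.

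The translation step is also unjustified: the assertion that the first-disagreement term $(b_k-b'_k)/(-t)^k$ ``dominates the geometric tail'' fails for $t$ close to $1$, where tails can be of size of order $1/(t-1)$. The evaluation $\pi_t$ is order-reversing for $\le_\mathrm{alt}$ only on sequences all of whose shifts evaluate into $\big[\tfrac{-t}{t+1},\tfrac{1}{t+1}\big]$; your $a$ is known to have this property at $t=\beta$ but not at $t=\beta'$, so the equivalence ``$a<_\mathrm{alt}a'\iff\pi_{\beta'}(a)>\pi_{\beta'}(a')$'' is not available, and even with $g(\beta')>0$ in hand you could not conclude. The paper avoids both obstacles entirely: it works with the sets of admissible sequences rather than with values, showing via the Ito--Sadahiro conditions \eqref{e:IS1}--\eqref{e:IS2} (plus the extra inequality \eqref{e:larger2} in the odd-period case) that $a<_\mathrm{alt}a'$ forces every $(-\beta)$-admissible word to be $(-\beta')$-admissible, and then converts this inclusion into $\beta\le\beta'$ by the entropy/lap-number count of Lemma~\ref{l:entropy}, with the same counting argument giving the injectivity $a=a'\Rightarrow\beta=\beta'$; no monotonicity in the base is ever needed.
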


It is often convenient to study a slightly different $(-\beta)$-transformation, 
\[
\widetilde{T}_{-\beta}:\, (0,1] \to (0,1], \quad x \mapsto -\beta x + \lfloor \beta x \rfloor + 1.
\]
As already noted in~\cite{Liao-Steiner}, the transformations $T_{-\beta}$ and $\widetilde{T}_{-\beta}$ are conjugate via the involution $\phi(x) = \frac{1}{\beta+1} - x$, i.e., 
\[
T_{-\beta} \circ \phi(x) = \phi \circ \widetilde{T}_{-\beta}(x) \quad \mbox{for all}\ x \in (0,1].
\]
Setting $\tilde\varepsilon_n(x) = \big\lfloor \beta\, \widetilde{T}_{-\beta}^{n-1}(x) \big\rfloor$ for $x \in (0,1]$, we have $x = - \sum_{n=1}^\infty \frac{\tilde\varepsilon_n(x)+1}{(-\beta)^n} = \frac{1}{\beta+1} - \sum_{n=1}^\infty \frac{\tilde\varepsilon_n(x)}{(-\beta)^n}$, and $\tilde\varepsilon_n(x) = \varepsilon_n(\phi(x))$.
Note that $\widetilde{T}_{-\beta}(x) = -\beta x - \lfloor -\beta x \rfloor$ except for finitely many points, hence $\widetilde{T}_{-\beta}$ is a natural generalization of the beta-transformation.
The map $\widetilde{T}_{-\beta}$ was studied e.g.\ by G\'ora~\cite{Gora07}, where it corresponds to the case $E = [1,1,\ldots,1]$, and in \cite{Liao-Steiner}.
The following corollary is an immediate consequence of Theorems~\ref{t:gora} and~\ref{t:char}.

\begin{corollary}
Let $a_1 a_2 \cdots$ be a sequence of non-negative integers satisfying \eqref{e:gora} and~\eqref{e:LS}. 
Then there exists a unique $\beta > 1$ such that
\begin{equation} \label{e:closedinterval2}
- \sum_{j=1}^\infty \frac{a_j+1}{(-\beta)^j} = 1 \quad \mbox{and} \quad - \sum_{j=1}^\infty \frac{a_{k+j}+1}{(-\beta)^j} \in [0,1] \quad \mbox{for all}\ k \ge 1. 
\end{equation}
Moreover, $\sum_{j=1}^\infty \frac{a_{k+j}+1}{(-\beta)^j} \ne 0$ for all $k \ge 1$ if and only if \eqref{e:ak1} and~\eqref{e:ak2} hold.
\end{corollary}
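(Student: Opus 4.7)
My plan is to reduce the corollary to Theorems~\ref{t:gora} and~\ref{t:char} using only the geometric series identity $\sum_{j=1}^\infty (-\beta)^{-j} = -\frac{1}{\beta+1}$, which holds for every $\beta > 1$. The corollary is really a change of coordinates under $\phi(x) = \frac{1}{\beta+1} - x$, reflecting the conjugation of $T_{-\beta}$ with $\widetilde{T}_{-\beta}$ described just before the corollary, so nothing beyond unpacking definitions should be needed.

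For every $k \ge 0$, the identity gives
\[
-\sum_{j=1}^\infty \frac{a_{k+j}+1}{(-\beta)^j} \;=\; \frac{1}{\beta+1} - \sum_{j=1}^\infty \frac{a_{k+j}}{(-\beta)^j}.
\]
Substituting this in~\eqref{e:closedinterval2}, I would note that $-\sum_{j=1}^\infty \frac{a_j+1}{(-\beta)^j} = 1$ is equivalent to $\sum_{j=1}^\infty \frac{a_j}{(-\beta)^j} = \frac{-\beta}{\beta+1}$, and that $-\sum_{j=1}^\infty \frac{a_{k+j}+1}{(-\beta)^j} \in [0,1]$ is equivalent to $\sum_{j=1}^\infty \frac{a_{k+j}}{(-\beta)^j} \in \big[\frac{-\beta}{\beta+1},\frac{1}{\beta+1}\big]$. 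Hence~\eqref{e:closedinterval2} is the termwise translation of~\eqref{e:closedinterval}, and the first assertion (existence and uniqueness of $\beta$) is immediate from Theorem~\ref{t:gora} applied to the same sequence $a_1 a_2 \cdots$.

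For the second assertion, the displayed identity above also shows that $\sum_{j=1}^\infty \frac{a_{k+j}+1}{(-\beta)^j} = 0$ is equivalent to $\sum_{j=1}^\infty \frac{a_{k+j}}{(-\beta)^j} = \frac{1}{\beta+1}$, i.e., to the tail sum attaining the right endpoint of the fundamental interval. Thus the condition that this sum is nonzero for every $k \ge 1$ is exactly the condition that every tail $\sum_{j=1}^\infty \frac{a_{k+j}}{(-\beta)^j}$ lies in the half-open interval $\big[\frac{-\beta}{\beta+1},\frac{1}{\beta+1}\big)$, which is precisely what it means for $a_1 a_2 \cdots$ to be the $(-\beta)$-expansion of $\frac{-\beta}{\beta+1}$. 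By Theorem~\ref{t:char}, and since \eqref{e:gora} and \eqref{e:LS} are assumed to hold, this is in turn equivalent to \eqref{e:ak1} and~\eqref{e:ak2}.

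I do not foresee a genuine obstacle here; the argument is essentially bookkeeping. The only mild subtlety is making sure that the same $\beta$ appears in both formulations, which is automatic because the translation between the two sides is an affine bijection not depending on the $a_j$ and the uniqueness of~$\beta$ comes from Theorem~\ref{t:gora}.
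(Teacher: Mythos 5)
Your proposal is correct and follows exactly the route the paper intends: the paper gives no separate argument, declaring the corollary an immediate consequence of Theorems~\ref{t:gora} and~\ref{t:char} via the conjugation $\phi(x)=\frac{1}{\beta+1}-x$, which is precisely your geometric-series translation between \eqref{e:closedinterval} and \eqref{e:closedinterval2} and between vanishing tails and tails hitting $\frac{1}{\beta+1}$. Your remark that the uniqueness in Theorem~\ref{t:gora} pins down the same $\beta$ in both directions is the right (and only) subtlety.
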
 

With the notation of~\cite{Gora07}, this means, for $E = [1,1,\ldots,1]$, that $a_1 a_2 \cdots$ is the itinerary $\mathrm{It}_\beta(1)$ for some $\beta > 1$ if and only if \eqref{e:gora}, \eqref{e:LS}, \eqref{e:ak1}, and~\eqref{e:ak2} hold.
Note that G\'ora \cite[Theorems~25 and~28]{Gora07} claims that already \eqref{e:gora} is sufficient when $a_1 \ge 2$, and he has a less explicit statement for $a_1 = 1$. 
However, his proof deals only with the first part of the theorem, i.e., that there exists a unique $\beta > 1$ satisfying~\eqref{e:closedinterval2}. 
To see that this is not sufficient, consider the sequences $a_1 a_2 \cdots \in \{2, 1\,0\}^\omega$.
They all satisfy \eqref{e:closedinterval2} with $\beta = 2$, and there are uncountably many of them satisfying~\eqref{e:gora} and $a_1 = 2$.
All these uncountably many sequences would have to be equal to $\mathrm{It}_2(1)$ by \cite[Theorem~25]{Gora07}, which is of course not true.
(See also~\cite{Dombek-Masakova-Pelantova11}.)
Moreover, G\'ora's proof of the existence of a unique $\beta > 1$ satisfying~\eqref{e:closedinterval2} is incorrect when $\beta$ is small, see Remark~\ref{r:order}.

\section{Proof of Theorem~\ref{t:order}}
Let $\beta > 1$. 
For a sequence of digits $b_1 \cdots b_n$, set 
\[
I_{b_1 \cdots b_n} = \big\{x \in \big[\tfrac{-\beta}{\beta+1}, \tfrac{1}{\beta+1}\big):\, \varepsilon_1(x) \cdots \varepsilon_n(x) = b_1 \cdots b_n\big\}\,,
\]
with $\varepsilon_j(x)$ as in~\eqref{e:epsilon}.
Let $L_{\beta,n}$ be the number of different sequences $b_1 \cdots b_n$ such that $I_{b_1 \cdots b_n} \ne \emptyset$, and let $L'_{\beta,n}$ be the number of different sequences $b_1 \cdots b_n$ such that $I_{b_1 \cdots b_n}$ is an interval of positive length. 
(The latter is called the lap number of~$T_{-\beta}^n$.)

\begin{lemma} \label{l:entropy}
For any $\beta > 1$, we have that $\lim_{n\to\infty} \frac{1}{n} \log L_{\beta,n} = \lim_{n\to\infty} \frac{1}{n} \log L'_{\beta,n} = \log \beta$.
\end{lemma}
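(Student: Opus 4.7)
The plan is to sandwich $L'_{\beta,n}$ (and thereby $L_{\beta,n}$) between two multiples of $\beta^n$. For the lower bound, I exploit that on each positive-length cylinder $I_{b_1\cdots b_n}$ the map $T_{-\beta}^n$ is affine with slope of absolute value $\beta^n$ and image contained in the domain $\big[\frac{-\beta}{\beta+1},\frac{1}{\beta+1}\big)$ of length~$1$. Hence $|I_{b_1\cdots b_n}|\le\beta^{-n}$, and summing over the positive-length cylinders, which partition the domain up to a null set, yields $L'_{\beta,n}\ge\beta^n$.

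For the upper bound I argue recursively. Writing $J_I:=T_{-\beta}^n(I)$ for each positive-length $n$-cylinder $I$, the passage from level $n$ to level $n+1$ subdivides $I$ by cutting $J_I$ according to the $1$-cylinder partition, whose pieces are half-open intervals of length at most $1/\beta$. Thus $J_I$ is split into at most $\beta|J_I|+2$ subintervals. Summing over $I$ and using $\sum_I|J_I|=\beta^n\sum_I|I|=\beta^n$ gives
\[
L'_{\beta,n+1}\le \beta^{n+1}+2L'_{\beta,n}.
\]
For $\beta>2$, induction immediately yields $L'_{\beta,n}=O(\beta^n)$. For $\beta\in(1,2]$ one must be more careful, because the naive recursion only gives growth rate $\log 2$. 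The fix is to classify cylinders according to whether $J_I$ is the whole domain (``full'' cylinders, which refine into exactly $\lceil\beta\rceil$ pieces and supply the dominant count) or a proper subinterval (``partial'' cylinders). Partial cylinders arise only from orbits that have at some earlier step mapped to the left endpoint $\frac{-\beta}{\beta+1}$, so their number grows polynomially in $n$; absorbing this correction restores the tight bound $L'_{\beta,n}\le C(\beta)\beta^n$.

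To pass from $L'_{\beta,n}$ to $L_{\beta,n}$ note that any non-empty cylinder that is not of positive length must be a singleton $\{x_0\}$, and $x_0$ necessarily sits at a boundary between two positive-length cylinders or at an endpoint of the domain. Hence the number of singletons is at most $L'_{\beta,n}+1$, so $L'_{\beta,n}\le L_{\beta,n}\le 2L'_{\beta,n}+1$. Combining with the sandwich on $L'_{\beta,n}$ and dividing by $n$, both $\frac{1}{n}\log L'_{\beta,n}$ and $\frac{1}{n}\log L_{\beta,n}$ converge to $\log\beta$. The main obstacle is the sharp upper bound for $\beta$ close to~$1$: the direct iterative argument is off by a factor of $2$ per step, and one has to exploit the specific structure of $T_{-\beta}$---that almost all cylinder images equal the full domain---to recover the correct exponential rate.
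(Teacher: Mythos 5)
Your lower bound and your comparison of $L_{\beta,n}$ with $L'_{\beta,n}$ are fine (the observation that between two singleton cylinders there must lie a positive-length cylinder, so the number of singletons is at most $L'_{\beta,n}+1$, is even a little slicker than the paper's estimate $L_{\beta,n}-L'_{\beta,n}\le\sum_{k\le n}L'_{\beta,k}$). The gap is exactly where you locate the difficulty, namely the upper bound for $1<\beta\le 2$, and your proposed fix does not close it. The claim that the ``partial'' cylinders (those whose image under $T_{-\beta}^n$ is a proper subinterval) ``arise only from orbits that have mapped to the left endpoint'' and hence are \emph{polynomially} many is false in general: what is true is that the possible \emph{images} of partial cylinders are determined by the forward orbits of the two endpoints of the domain, so there are only $O(n)$ distinct images at level $n$; but exponentially many level-$n$ cylinders can share such a non-full image (already in the positive-base analogue, e.g.\ for the golden ratio, the non-full cylinders are those whose word ends in the active constraint, and their number grows like $\beta^n$). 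What one can actually prove along your lines is that every positive-length cylinder, full or partial, has at most two partial children (one from each end of its image), giving $p_{n+1}\le 2F_n+2p_n$ with $F_n\le\beta^n$ for the full ones; but for $\beta\le 2$ this recursion still only yields growth of order $2^n$, i.e.\ the bound $\max(\log\beta,\log 2)$ --- the same factor-$2$-per-step obstruction as your first recursion, merely relocated.

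The paper overcomes precisely this obstruction by working in blocks of $m$ steps, where $m$ is chosen with $\beta^m>2$: letting $\delta$ be the minimal positive length of a level-$m$ cylinder, every level-$n$ word that is neither the alternating-lexicographically minimal nor maximal admissible extension of its length-$(n-m)$ prefix gives a cylinder of length at least $\beta^{m-n}\delta$, whence $(L'_{\beta,n}-2L'_{\beta,n-m})\beta^{m-n}\delta\le 1$ and $L'_{\beta,n}\le\beta^{n-m}/\delta+2L'_{\beta,n-m}$. Because $\beta^m>2$, iterating this recursion does give $L'_{\beta,n}=O(\beta^n)$ for \emph{all} $\beta>1$. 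Your argument needs either this $m$-step device or a genuinely different mechanism controlling the partial cylinders for small $\beta$; as written, the case $1<\beta\le2$ is not proved.
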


\begin{proof}
It is well known that the entropy of~$T_{-\beta}$, which is a piecewise linear map of constant slope~$-\beta$, is $\log \beta$.
The lemma can be derived from this fact, see~\cite{Frougny-Lai11}, but we prefer giving a short elementary proof, following Faller \cite[Proposition~3.6]{Faller08}. 
As $\big|\frac{d}{dx} T_{-\beta}^n(x)\big| = \beta^n$ at all points of continuity of~$T_{-\beta}^n$, the length of any interval $I_{b_1 \cdots b_n}$ is at most~$\beta^{-n}$.
Since the intervals $I_{b_1 \cdots b_n}$ form a partition of an interval of length~$1$, we obtain that $L_{\beta,n} \ge L_{\beta,n}' \ge \beta^n$. 

To get an upper bound for~$L'_{\beta,n}$, let $m$ be the smallest positive integer such that $\beta^m > 2$, and let $\delta$ be the minimal positive length of an interval $I_{b_1 \cdots b_m}$. 
Consider an interval~$I_{b_1 \cdots b_n}$, $n > m$, such that $b_1 \cdots b_n$ is neither the minimal nor the maximal sequence (with respect to the alternating lexicographic order) starting with $b_1 \cdots b_{n-m}$ and satisfying $I_{b_1 \cdots b_n} \ne \emptyset$.
Then each prolongation $b_1 b_2 \cdots$ satisfies the inequalities in \eqref{e:IS1} and \eqref{e:IS2}, respectively, for $1 \le k \le n-m$. 
Therefore, $b_1 b_2 \cdots$ is $(-\beta)$-admissible if and only if $b_{n-m+1} b_{n-m+2} \cdots$ is $(-\beta)$-admissible.
This implies that $T_{-\beta}^{n-m}(I_{b_1 \cdots b_n}) = I_{b_{n-m+1} \cdots b_m}$, and the length of $I_{b_1 \cdots b_n}$ is $\beta^{m-n}$ times the length of~$I_{b_{n-m+1} \cdots b_m}$, thus at least $\beta^{m-n} \delta$ when the length is positive.
There are at least $L'_{\beta,n} - 2 L'_{\beta,n-m}$ sequences $b_1 \cdots b_n$ such that $I_{b_1 \cdots b_n}$ has positive length and $b_1 \cdots b_n$ is neither the minimal nor the maximal sequence starting with $b_1 \cdots b_{n-m}$ and satisfying $I_{b_1 \cdots b_n} \ne \emptyset$.
This yields that $(L'_{\beta,n} - 2 L'_{\beta,n-m}) \beta^{m-n} \delta \le 1$ for all $n > m$, thus
\begin{align*}
L'_{\beta,n} & \le \frac{\beta^{n-m}}{\delta} + 2 L'_{\beta,n-m} \le \frac{\beta^{n-m}}{\delta} + \frac{2\beta^{n-2m}}{\delta} + 4 L'_{\beta,n-2m} \le \cdots \\
& \hspace{-1.2em} \le \frac{\beta^{n-m}}{\delta} \sum_{j=0}^{\lceil n/m\rceil-2} \bigg(\frac{2}{\beta^m}\bigg)^j + 2^{\lceil n/m\rceil-1} L'_{\beta,n-\lceil n/m\rceil m+m} < \frac{\beta^n}{\delta}\, \frac{1}{\beta^m-2} + \beta^n L'_{\beta,m} \le \frac{\beta^n}{\delta}\, \frac{\beta^m-1}{\beta^m-2}\,.
\end{align*}
This shows that $\lim_{n\to\infty} \frac{1}{n} \log L'_{\beta,n} = \beta$.

An interval $I_{b_1 \cdots b_n}$ consists only of one point if and only if $I_{b_1 \cdots b_k} = \big\{\frac{-\beta}{\beta+1}\big\}$ and $b_{k+1} \cdots b_n = a_1 \cdots a_{n-k}$ for some $k \le n$.
(This can happen only in case that $a_1 a_2 \cdots$ is periodic with odd period length.) 
Therefore, we can estimate $L_{\beta,n} - L'_{\beta,n} \le L'_{\beta,0} + L'_{\beta,1} + \cdots + L'_{\beta,n} \le C \beta^n$ for some constant $C > 0$, thus $\lim_{n\to\infty} \frac{1}{n} \log L_{\beta,n} = \lim_{n\to\infty} \frac{1}{n} \log L'_{\beta,n}$.
\end{proof}

For the proof of Theorem~\ref{t:order}, let $a_1 a_2 \cdots$ be the $(-\beta)$-expansion of~$\frac{-\beta}{\beta+1}$ and $a'_1 a'_2 \cdots$ be the $(-\beta')$-expansion of~$\frac{-\beta'}{\beta'+1}$, $\beta, \beta' > 1$.
If $\beta = \beta'$, then we clearly have that $a_1 a_2 \cdots = a'_1 a'_2 \cdots$.
If $a_1 a_2 \cdots = a'_1 a'_2 \cdots$, then the $(-\beta)$-admissible sequences are equal to the $(-\beta')$-admissible sequences, thus $L_{\beta,n} = L_{\beta',n}$ for all $n \ge 1$, and $\beta = \beta'$ by Lemma~\ref{l:entropy}.
Therefore, the equations $\beta = \beta'$ and $a_1 a_2 \cdots = a'_1 a'_2 \cdots$ are equivalent.
Hence, it suffices to show that $a_1 a_2 \cdots <_\mathrm{alt} a'_1 a'_2 \cdots$ implies that $\beta < \beta'$, as the other direction follows by contraposition.

Assume that $a_1 a_2 \cdots <_\mathrm{alt} a'_1 a'_2 \cdots$, and let $b_1 b_2 \cdots$ be a $(-\beta)$-admissible sequence. 
By \eqref{e:IS1} and \eqref{e:IS2} respectively, we have that
\begin{equation} \label{e:smaller}
b_k b_{k+1} \cdots \le_\mathrm{alt} a_1 a_2 \cdots <_\mathrm{alt} a'_1 a'_2 \cdots.
\end{equation}
Furthermore, as $\overline{0 a_1 \cdots a_{2\ell} ({a_{2\ell+1}\!-\!1})} >_\mathrm{alt} 0 a_1 a_2 \cdots$ for all $\ell \ge 0$, we obtain that
\begin{equation} \label{e:larger}
b_k b_{k+1} \cdots >_\mathrm{alt} 0 a_1 a_2 \cdots >_\mathrm{alt} 0 a'_1 a'_2 \cdots.
\end{equation}
If $a'_1 a'_2 \cdots$ is not periodic with odd period length, then \eqref{e:smaller} and \eqref{e:larger} show that $b_1 b_2 \cdots$ is $(-\beta')$-admissible, thus $L_{\beta,n} \le L_{\beta',n}$ for all $n \ge 1$, and $\beta \le \beta'$ by Lemma~\ref{l:entropy}.
Since $a_1 a_2 \cdots \ne a'_1 a'_2 \cdots$, this yields that $\beta < \beta'$. 
In case $a'_1 a'_2 \cdots = \overline{a'_1 \cdots a'_{2\ell'+1}}$, we show that 
\begin{equation} \label{e:larger2}
a_1 a_2 \cdots \le_\mathrm{alt} \overline{a'_1 \cdots a'_{2\ell'} ({a'_{2\ell'+1}\!-\!1}) 0}.
\end{equation}
This is clearly true when $a_1 \cdots a_{2\ell'+1} <_\mathrm{alt} a'_1 \cdots a'_{2\ell'} ({a'_{2\ell'+1}\!-\!1})$.
If $a_1 \cdots a_{2\ell'+1} = a'_1 \cdots a'_{2\ell'+1}$, then $a_{2\ell'+2} a_{2\ell'+3} \cdots >_\mathrm{alt} a'_{2\ell'+2} a'_{2\ell'+3} \cdots = a'_1 a'_2 \cdots >_\mathrm{alt} a_1 a_2 \cdots$, contradicting~\eqref{e:gora}.
It remains to consider the case that $a_1 \cdots a_{2\ell'+1} = a'_1 \cdots a'_{2\ell'} ({a'_{2\ell'+1}\!-\!1})$.
If $a_{2\ell'+1} > 0$, then \eqref{e:larger2} holds, otherwise $a_1 \cdots a_{2\ell'+2} = a'_1 \cdots a'_{2\ell'} ({a'_{2\ell'+1}\!-\!1}) 0$.
In the latter case, \eqref{e:gora}~implies that $a_{2\ell'+3} \cdots a_{4\ell'+4} \le_\mathrm{alt} a_1 \cdots a_{2\ell'+2} = a'_1 \cdots a'_{2\ell'} ({a'_{2\ell'+1}\!-\!1}) 0$, and we obtain inductively that \eqref{e:larger2} holds.
Now, \eqref{e:smaller}, \eqref{e:larger}, and~\eqref{e:larger2} show that $b_1 b_2 \cdots$ is $(-\beta')$-admissible, which yields as above that $\beta < \beta'$.

\section{Proof of Theorem~\ref{t:gora}} \label{sec:proof-theor-reft:g}

Let $a_1 a_2 \cdots$ be a sequence of non-negative integers satisfying \eqref{e:gora} and~\eqref{e:LS}. 
We show that there exists a unique $\beta > 1$ satisfying~\eqref{e:closedinterval2}, which is equivalent to~\eqref{e:closedinterval}.
For $n \ge 1$,~set
\begin{gather*}
P_n(x) = (-x)^n + \sum_{j=1}^n (a_j+1)\, (-x)^{n-j}, \\
J_n = \big\{x > 1 \mid P_j(x) \in [0,1]\ \mbox{for all}\ 1 \le j \le n\big\}.
\end{gather*}
Then $J_1 \supseteq J_2 \supseteq J_3 \supseteq \cdots$, and $J_n$ is compact if and only if $\inf J_n \ne 1$. 

First note that, for $\beta > 1$, \eqref{e:closedinterval2} is equivalent to $\beta \in \bigcap_{n\ge1} J_n$.
Indeed, if \eqref{e:closedinterval2} holds, then $P_n(\beta) = - \sum_{j=1}^\infty \frac{a_{n+j}+1}{(-\beta)^j} \in [0,1]$ for all $n \ge 1$. 
On the other hand, if $P_n(\beta) \in [0,1]$ for all $n \ge 1$, then $\big|1 + \sum_{j=1}^\infty \frac{a_j+1}{(-\beta)^j}\big| = \lim_{n\to\infty} \frac{P_n(\beta)}{(-\beta)^n} = 0$, thus \eqref{e:closedinterval2} holds.

Inductively for $n \ge 1$, we show the following statements, where we use the abbreviations $v_{[j,k]}$ for $v_j v_{j+1} \cdots v_k$ and $v_{[j,k)}$~for $v_j v_{j+1} \cdots v_{k-1}$:
\begin{enumerate}
\item \label{1}
$J_n$ is a non-empty interval, with $\inf J_n = 1$ if and only if $a_{[1,n]} = u_{[1,n]}$. \newline
If $P_n(\beta) = P_n(\beta') \in \{0,1\}$ with $\beta, \beta' \in J_n$, then $\beta = \beta'$. 
\item \label{2}
If $n$ is even, $a_{[1,n-2m+1]} = u_{[1,n-2m+1]}$ or $a_{[n-2m+2,n]} \ne a_{[1,2m)}$ for all $1 \le m \le n/2$, and $a_{[1,n]} \ne u_{[1,n]}$, then $P_n(\min J_n) = 0$. \newline
If $n$ is odd and $a_{[n-2m+2,n]} \ne a_{[1,2m)}$ for all $1 \le m \le n/2$, then $P_n(\max J_n) = 0$.
\item \label{3}
If $n$ is even, $a_{[1,n-2m+1]} \ne u_{[1,n-2m+1]}$ and $a_{[n-2m+2,n]} = a_{[1,2m)}$ for some $1 \le m \le n/2$, and $m$ is maximal with this property, then $P_n(\min J_n) = P_{2m-1}(\min J_n)$. \newline
If $n$ is odd, $a_{[n-2m+2,n]} = a_{[1,2m)}$ for some $1 \le m \le n/2$, and $m$ is maximal with this property, then $P_n(\max J_n) = P_{2m-1}(\max J_n)$.
\item \label{4}
If $n$ is even and $a_{[n-2m+1,n]} \ne a_{[1,2m]}$ for all $1 \le m < n/2$, then $P_n(\max J_n) = 1$. \newline
If $n$ is odd, $a_{[1,n-2m]} = u_{[1,n-2m]}$ or $a_{[n-2m+1,n]} \ne a_{[1,2m]}$ for all $1 \le m < n/2$, and $a_{[1,n]} \ne u_{[1,n]}$, then $P_n(\min J_n) = 1$.
\item \label{5}
If $n$ is even, $a_{[n-2m+1,n]} = a_{[1,2m]}$ for some $1 \le m < n/2$, and $m$ is maximal with this property, then $P_n(\max J_n) = P_{2m}(\max J_n)$. \newline
If $n$ is odd, $a_{[1,n-2m]} \ne u_{[1,n-2m]}$ and $a_{[n-2m+1,n]} = a_{[1,2m]}$ for some $1 \le m < n/2$, and $m$ is maximal with this property, then $P_n(\min J_n) = P_{2m}(\min J_n)$.
\end{enumerate}

We have that $P_1(x) = a_1+1 - x$, and $a_1 \ge 1$ by~\eqref{e:LS}. 
If $a_1 \ge 2$, then $J_1 = [a_1, a_1+1]$, $P_1(a_1) = 1$ and $P_1(a_1+1) = 0$; if $a_1 = 1$, then $J_1 = (1,2]$ and $P_1(2) = 0$.
Therefore, the statements hold for $n = 1$. 
Assume that they hold for $n-1$, and set 
\[
B = \big\{b \in \{0,1,\ldots,a_1\}:\, b+1 - x P_{n-1}(x) \in [0,1]\ \mbox{for some}\ x \in J_{n-1}\big\},
\]
i.e., $J_n \ne \emptyset$ if and only if $a_n \in B$.

Assume first that $a_{[1,n)} \ne u_{[1,n)}$, i.e., $\inf J_{n-1} = \min J_{n-1} > 1$, and that $n$ is even.
\renewcommand{\theenumi}{\roman{enumi}}
\begin{enumerate}
\item \label{i}
If $a_{[n-2m+1,n)} \ne a_{[1,2m)}$ for all $1 \le m < n/2$, then $P_{n-1}(\max J_{n-1}) = 0$, thus 
\[
1 - (\max J_{n-1})\, P_{n-1}(\max J_{n-1}) = 1.
\]
This implies that $0 \in B$, and $P_n(\max J_n) = P_n(\max J_{n-1}) = 1$ if $a_n = 0$. 
Since the map $x \mapsto x P_{n-1}(x)$ is continuous and $J_{n-1}$ is an interval, we get that \mbox{$P_n(\max J_n) = 1$} for $a_n > 0$ as well, when $J_n \ne \emptyset$.
Moreover, we clearly have that $a_{[n-2m+1,n]} \ne a_{[1,2m]}$ for all $1 \le m < n/2$, thus (\ref{4}) holds when $a_n \in B$. 
\item \label{ii}
If $a_{[n-2m+1,n)} = a_{[1,2m)}$ for some $1 \le m < n/2$, and $m$ is maximal with this property, then $P_{n-1}(\max J_{n-1}) = P_{2m-1}(\max J_{n-1})$, thus
\[
a_{2m}+1 - (\max J_{n-1})\, P_{n-1}(\max J_{n-1}) = P_{2m}(\max J_{n-1}) \in [0,1],
\]
where we have used that $J_{n-1} \subseteq J_{2m}$ and $P_{2m}(J_{2m}) \subseteq [0,1]$.
This gives $a_{2m} \in B$. \newline
If $a_n = a_{2m}$, then $\max J_n = \max J_{n-1}$ and $P_n(\max J_{n-1}) = P_{2m}(\max J_{n-1})$, thus $P_n(\max J_n) = P_{2m}(\max J_n)$ and $a_{[n-2m+1,n]} = a_{[1,2m]}$.
By the maximality of~$m$, we have that $a_{[n-2\ell+1,n]} \ne a_{[1,2\ell]}$ for all $m < \ell < n/2$, thus (\ref{5}) holds. \newline
If $a_n \ne a_{2m}$, then the equation $a_{[n-2m+1,n)} = a_{[1,2m)}$ and \eqref{e:gora} yield that $a_n > a_{2m}$, thus $P_n(\max J_n) = 1$ when $J_n \ne \emptyset$, similarly to~(\ref{i}). 
If $a_{[1,2\ell)} = a_{[n-2\ell+1,n)}$, $1 \le \ell < m$, then we also have that $a_{[1,2\ell)} = a_{[2m-2\ell+1,2m)}$, thus $a_{2\ell} \le a_{2m} < a_n$.
This implies that $a_{[n-2\ell+1,n]} \ne a_{[1,2\ell]}$ for all $1 \le \ell < n/2$, thus (\ref{4}) holds when $a_n \in B$.
\item \label{iii}
If $a_{[1,n-2m)} = u_{[1,n-2m)}$ or $a_{[n-2m,n)} \ne a_{[1,2m]}$ for all $1 \le m \le n/2 - 1$, then we have that $P_{n-1}(\min J_{n-1}) = 1$, thus
\[
a_1+1 - (\min J_{n-1})\, P_{n-1}(\min J_{n-1}) = P_1(\min J_{n-1}) \in [0,1],
\]
and $a_1 \in B$.
If $a_n = a_1$, then $\min J_n = \min J_{n-1}$ and $P_n(\min J_{n-1}) = P_1(\min J_{n-1})$, thus $P_n(\min J_n) = P_1(\min J_n)$, and $a_{[1,n-2m+1]} = u_{[1,n-2m+1]}$ or $a_{[n-2m+2,n]} \ne a_{[1,2m)}$ for all $2 \le m \le n/2$.
Therefore, (\ref{3})~holds.
If $a_n < a_1$, then $P_n(\min J_n) = 0$ when $J_n \ne \emptyset$, $a_{[1,n-2m+1]} = u_{[1,n-2m+1]}$ or $a_{[n-2m+2,n]} \ne a_{[1,2m)}$ for all $1 \le m \le n/2$, thus (\ref{2}) holds when $a_n \in B$.
\item \label{iv}
If $a_{[1,n-2m)} \ne u_{[1,n-2m)}$ and $a_{[n-2m,n)} = a_{[1,2m]}$ for some $1 \le m \le n/2-1$, and $m$ is maximal with this property, then $P_{n-1}(\min J_{n-1}) = P_{2m}(\min J_{n-1})$, thus
\[
a_{2m+1}+1 - (\min J_{n-1})\, P_{n-1}(\min J_{n-1}) = P_{2m+1}(\min J_{n-1}) \in [0,1],
\]
hence $a_{2m+1} \in B$. 
If $a_n = a_{2m+1}$, then $\min J_n = \min J_{n-1}$ and $P_n(\min J_{n-1}) = P_{2m+1}(\min J_{n-1})$, thus $P_n(\min J_n) = P_{2m+1}(\min J_n)$, and $a_{[n-2m,n]} = a_{[1,2m+1]}$.
The maximality of~$m$ yields that $a_{[1,n-2\ell+1]} = u_{[1,n-2\ell+1]}$ or $a_{[n-2\ell+2,n]} \ne a_{[1,2\ell)}$ for all $m+1 < \ell \le n/2$, thus (\ref{3})~holds. 
If $a_n \ne a_{2m+1}$, then $a_n < a_{2m+1}$ by~\eqref{e:gora}.
If moreover $a_{[1,2\ell-2]} = a_{[n-2\ell+2,n)}$, $1 \le \ell \le m$, then we have that $a_{[1,2\ell-2]} = a_{[2m-2\ell+3,2m]}$, thus $a_{2\ell-1} \ge a_{2m+1} > a_n$.
Then we get that $P_n(\min J_n) = 0$ when $J_n \ne \emptyset$, $a_{[1,n-2\ell+1]} = u_{[1,n-2\ell+1]}$ and $a_{[n-2\ell+2,n]} \ne a_{[1,2\ell)}$ for all $1 \le \ell \le n/2$, thus (\ref{2}) holds when $a_n \in B$. 
\end{enumerate}
Since $x \mapsto x P_{n-1}(x)$ is continuous and $J_{n-1}$ is an interval, the set $B$ is an interval of integers.
The paragraphs (\ref{i}) and (\ref{ii}) show that $a_n$ is not smaller than the smallest element of~$B$, (\ref{iii}) and (\ref{iv}) show that $a_n$ is not larger than the largest element of~$B$, thus $a_n \in B$.
We have therefore proved that $J_n \ne \emptyset$ and (\ref{2})--(\ref{5}) hold, when $a_{[1,n)} \ne u_{[1,n)}$ and $n$ is even.
For odd~$n$, the proof runs along the same lines and is left to the reader.

If $a_{[1,n)} = u_{[1,n)}$, then $\inf J_{n-1} = 1$.
From \cite[Proposition~3.5]{Liao-Steiner}, we know that $u_n \in B$, that $\inf J_n = 1$ when $a_n = u_n$, and that $\min J_n > 1$ when $u_n \ne a_n \in B$.
Let first $n$ be even, thus $a_n \le u_n$ by~\eqref{e:LS}.
If $a_{[n-2m+1,n)} \ne a_{[1,2m)}$ for all $1 \le m < n/2$, then we obtain as in~(\ref{i}) that $0 \in B$, thus $a_n \in B$, and (\ref{4})~holds.
If $a_{[n-2m+1,n)} = a_{[1,2m)}$ for some $1 \le m < n/2$, and $m$ is maximal with this property, then (\ref{ii}) yields that $a_{2m} \in B$ and $a_{2m} \le a_n$, thus $a_n \in B$. 
If $a_n = a_{2m}$, then (\ref{5}) holds; if $a_n > a_{2m}$, then (\ref{4}) holds.
Moreover, if $a_n < u_n$, then we get that $P_n(\min J_n) = 0$, thus (\ref{2}) holds. 
Again, if $n$ is odd, then similar arguments apply. 
Hence, we have proved that $J_n \ne \emptyset$ and (\ref{2})--(\ref{5}) hold for the case that $a_{[1,n)} = u_{[1,n)}$ too.

If $J_n$ is not an interval, then the continuity of $x \mapsto x P_{n-1}(x)$ on the interval~$J_{n-1}$ implies that $P_n$ meets the lower bound~$0$ or the upper bound~$1$ at least twice within~$J_n$. 
Therefore, suppose that $P_n(\beta) = P_n(\beta') \in \{0,1\}$ for $\beta, \beta' \in J_n$.
If $P_j(\beta) \in (0,1]$ and $P_j(\beta') \in (0,1]$ for all $1 \le j < n$, then the $(-\beta)$-expansion of~$\frac{-\beta}{\beta+1}$ and the $(-\beta')$-expansion of~$\frac{-\beta'}{\beta'+1}$ are both $\overline{a_{[1,n]}}$ (if $P_n(\beta) = 1$) or $\overline{a_{[1,n)} ({a_n\!+\!1})}$  (if $P_n(\beta) = 0$), thus $\beta = \beta'$ by Theorem~\ref{t:order}.

Suppose in the following that $P_j(\beta') = 0$ for some $1 \le j < n$, and let $\ell \ge 1$ be minimal such that $P_\ell(\beta') \in \{0,1\}$.
If $P_\ell(\beta') = 0$, then $a_{\ell+1} = 0$ and $P_{\ell+1}(\beta') = 1$, hence $a_{[1,n]}$ is a concatenation of blocks $a_{[1,\ell]} 0$ and $a_{[1,\ell)} ({a_\ell\!+\!1})$, except possibly for the last block, which is $a_{[1,\ell]}$ when $P_n(\beta') = 0$.
If $P_\ell(\beta') = 1$, then $a_{[1,n]}$ is a concatenation of blocks $a_{[1,\ell]}$ and $a_{[1,\ell)} ({a_\ell\!-\!1}) 0$, ending with $a_{[1,\ell)} ({a_\ell\!-\!1})$ when $P_n(\beta') = 0$.
We obtain that
\[
P_n(x) = P_n(\beta') + \big(P_\ell(x) - P_\ell(\beta')\big)\, Q(x)
\]
for some polynomial $Q(x) = \sum_{j=0}^{n-\ell} q_j\, (-x)^j$ with coefficients $q_j \in \{0,1\}$, and $q_{j-1} = q_{j-2} = \cdots = q_{j-\ell+1} = 0$ whenever $q_j = 1$.
If $P_\ell(\beta) = P_\ell(\beta')$, then the induction hypotheses yield that $\beta = \beta'$.
If $P_\ell(\beta) \ne P_\ell(\beta')$, then $Q(\beta) = 0$, which implies that $1 < \frac{1}{\beta^{\ell+1}} + \frac{1}{\beta^{2\ell+1}} + \cdots = \frac{1}{\beta^{\ell+1}-\beta}$ when $\ell$ is even, $1 < \frac{1}{\beta^\ell} + \frac{1}{\beta^{2\ell+1}} + \cdots = \frac{\beta}{\beta^{\ell+1}-1}$ when $\ell$ is odd, i.e., $\beta^{\ell+1} < \beta + 1$.

To exclude the latter case, suppose that $P_n(\beta) = P_n(\beta') \in \{0,1\}$ for $\beta, \beta' \in J_n$, $\beta \ne \beta'$, and that $\beta^{\ell+1} < \beta + 1$ for the minimal $\ell \ge 1$ such that $P_\ell(\beta') \in \{0,1\}$.
Set $g_k = \lfloor 2^{k+1}/3\rfloor$, and let, for $k \ge 1$, $\gamma_k$~and $\eta_k$ be the real numbers greater than~$1$ satisfying $\gamma_k^{g_k+1} = \gamma_k + 1$, $\eta_k^{g_k+1} = \eta_k^{g_{k-1}+1} + 1$ when $k$ is even, $\eta_k^{g_k} = \eta_k^{g_{k-1}} + 1$ when $k$ is odd, as in~\cite{Liao-Steiner}.
For the positive integer~$m$ satisfying $g_m \le \ell < g_{m+1}$, we have that $\beta < \gamma_m < \eta_m$.
By Proposition~3.5 in~\cite{Liao-Steiner} and its proof, $\beta < \eta_m$ implies that the $(-\beta)$-expansion of~$\frac{-\beta}{\beta+1}$ starts with~$\varphi^m(1)$ and that $\widetilde{T}_{-\beta}^j(1) \not\in \{0,1\}$ for all $1 \le j \le |\varphi^m(1)| = g_{m+1} + \frac{1-(-1)^m}{2}$, where $|w|$ denotes the length of the word~$w$.
Since $\beta \in J_n$ and $P_n(\beta) \in \{0,1\}$, we obtain that $a_1 a_2 \cdots$ starts with~$\varphi^m(1)$ and that $n > |\varphi^m(1)|$.
By equation~(3.2) in \cite{Liao-Steiner}, we have that $P_{2^m}(x) > 1$ for all $x > \eta_m$ (note that $2^m = |\varphi^{m-1}(10)| < |\varphi^m(1)|$), thus $J_{2^m} = (1, \eta_m]$, and $\ell < g_{m+1}$ yields that $\beta' = \eta_m$, $\ell = 2^m$.
As~$\beta$ and $\beta'$ are in the interval~$J_{n-1}$, we also have that $\gamma_m \in J_{n-1}$. 
The $(-\gamma_m)$-expansion of~$\frac{-\gamma_m}{\gamma_m+1}$ is $\varphi^{m-1}(1)\, \overline{\varphi^{m-1}(0)}$ by \cite[Theorem~2.5]{Liao-Steiner}.
Since $n \ge 2 \ell$ by the above block decomposition of~$a_{[1,n]}$, we obtain that $a_1 a_2 \cdots$ starts with~$\varphi^{m-1}(1000)$ if $m \ge 2$, and with $100$ if $m = 1$. 
In case $m = 1$, we get that $P_3(2) \not\in J_3$, contradicting that $2 = \eta_1 = \beta' \in J_n$. 
For $m \ge 2$, we have that $P_{|\varphi^{m-1}(1000)|}(\eta_m)  > P_{|\varphi^{m-1}(10)|}(\eta_m) = 1$ because $P_{|\varphi^{m-1}(100)|}(\eta_m) = P_{|\varphi^m(1)|}(\eta_m) < P_{|\varphi^{m-1}(1)|}(\eta_m)$ by equation~(3.4) in~\cite{Liao-Steiner} and, using the notation of~\cite{Liao-Steiner}, the function $f_{\gamma_m,\varphi^{m-1}(0)}$ is order-reversing. 
Again, this contradicts that $\eta_m = \beta' \in J_n$.
Therefore, we have shown that $\beta = \beta'$ whenever $P_n(\beta) = P_n(\beta') \in \{0,1\}$, $\beta, \beta' \in J_n$.
Hence, $J_n$ is an interval, and (\ref{1})--(\ref{5}) hold for all $n \ge 1$.

As the $J_n$ form a sequence of nested non-empty intervals that are compact for sufficiently large~$n$, we have that $\bigcap_{n\ge1} J_n \ne \emptyset$, thus there exists some $\beta > 1$ satisfying~\eqref{e:closedinterval2}, which is equivalent to~\eqref{e:closedinterval}.
To show that $\beta$ is unique, suppose that $\bigcap_{n\ge1} J_n$ is not a single point. 
Then $\bigcap_{n\ge1} J_n$ is an interval of positive length, thus there exist $\beta, \beta' \in \bigcap_{n\ge1} J_n$, $\beta \ne \beta'$, such that $P_n(\beta) \in (0,1]$ and $P_n(\beta') \in (0,1]$ for all $n \ge 1$.
This means that $a_1 a_2 \cdots$ is both the $(-\beta)$-expansion of~$\frac{-\beta}{\beta+1}$ and the $(-\beta')$-expansion of~$\frac{-\beta'}{\beta'+1}$, which contradicts that $\beta \ne \beta'$ by Theorem~\ref{t:order}.
This concludes the proof of Theorem~\ref{t:gora}.

\begin{remark} \label{r:order}
Some parts of the proofs of Theorems~\ref{t:gora} and~\ref{t:order} can be simplified when one is only interested in $\beta > 1$ not too close to~$1$.
Since $P_n(x) = a_n + 1 - x P_{n-1}(x)$ for $n \ge 2$, and $P'_1(x) = -1$, the derivative of $P_n(x)$ is
\[
P'_n(x) = (-1)\, \big(P_{n-1}(x) + x P'_{n-1}(x)\big) = \cdots = (-1)^n x^{n-1} \bigg(1 + \sum_{j=1}^{n-1} \frac{P_j(x)}{(-x)^j}\bigg).
\]
If $x \in J_{n-1}$, then $1 + \sum_{j=1}^{n-1} \frac{P_j(x)}{(-x)^j} > 1 - \frac{1}{x} - \frac{1}{x^3} - \cdots = \frac{x^2-x-1}{x^2-1}$.
If moreover $x \ge ({1\!+\!\sqrt{5}})/2$, then we get that $(-1)^n P'_n(x) > 0$, hence $P_n$ is a strictly increasing (decreasing) function on $J_{n-1} \cap [({1\!+\!\sqrt{5}})/2, \infty)$ when $n$ is even (odd).
Moreover, $\lim_{n\to\infty} |P'_n(x)| = \infty$ if $x \ge ({1\!+\!\sqrt{5}})/2$ and $x \in J_n$ for all $n \ge 1$.

However, it is not true that $P_n$ is always increasing (decreasing) on $J_{n-1}$ when $n$ is even (odd).
For instance, if $a_1 a_2 \cdots$ starts with $1 0 0 1$, then $P_4(x) = x^4 - 2x^3 + x^2 - x + 2$ and $J_3 = (1, \beta]$ with $\beta^3 = 2\beta^2 - \beta + 1$ ($\beta \approx 1.755$). 
The function $P_4$ decreases on $(1,\beta']$, with $\beta' \approx 1.261$, and increases on $[\beta', \infty)$. 
Note that this is a major flaw in the proof of Theorem~28 of \cite{Gora07} (besides the fact that the statement is incorrect, as explained in the Introduction).
This lack of monotonicity is what makes Theorems~\ref{t:gora} and~\ref{t:order} more difficult to prove than the corresponding statements for $\beta$-expansions.
\end{remark}

\section{Proof of Theorem~\ref{t:char}} \label{sec:proof-theor-reft:ch}
Let $a_1 a_2 \cdots$ be a sequence of non-negative integers satisfying \eqref{e:gora} and~\eqref{e:LS}.
We have already seen in the Introduction that these conditions are necessary to be the $(-\beta)$-expansion of~$\frac{-\beta}{\beta+1}$ for some $\beta > 1$.
Moreover, $\beta$ can only be the number given by Theorem~\ref{t:gora}.
Then $a_1 a_2 \cdots$ is the $(-\beta)$-expansion of $\frac{-\beta}{\beta+1}$ if and only if $\sum_{j=1}^\infty \frac{a_{k+j}}{(-\beta)^j} \ne \frac{1}{\beta+1}$ for all $k \ge 1$.  

Suppose first that $\sum_{j=1}^\infty \frac{a_{k+j}}{(-\beta)^j} = \frac{1}{\beta+1}$ for some $k \ge 1$, and let $\ell \ge 1$ be minimal such that $\sum_{j=1}^\infty \frac{a_{\ell+j}}{(-\beta)^j} \in \big\{\frac{-\beta}{\beta+1}, \frac{1}{\beta+1}\big\}$.
If $\sum_{j=1}^\infty \frac{a_{\ell+j}}{(-\beta)^j} = \frac{-\beta}{\beta+1}$, then the $(-\beta)$-expansion of $\frac{-\beta}{\beta+1}$ is $\overline{a_{[1,\ell]}}$.
Then $a_1 a_2 \cdots$ is composed of blocks $a_{[1,\ell]}$ and $a_{[1,\ell)} ({a_\ell\!-\!1}) 0$. 
Since $\sum_{j=1}^\infty \frac{a_{k+j}}{(-\beta)^j} = \frac{1}{\beta+1}$ for some $k \ge 1$, we have at least one block $a_{[1,\ell)} ({a_\ell\!-\!1}) 0$, i.e., $a_1 a_2 \cdots \in \{a_{[1,\ell]},\, a_{[1,\ell)} ({a_\ell\!-\!1}) 0\}^\omega \setminus \{\overline{a_{[1,\ell]}}\}$.
As $\overline{a_{[1,\ell]}}$ is the $(-\beta)$-expansion of $\frac{-\beta}{\beta+1}$, we have that $\overline{a_{[1,\ell]}} >_\mathrm{alt} u_1 u_2 \cdots$, thus \eqref{e:ak1} does not hold.
If $\sum_{j=1}^\infty \frac{a_{\ell+j}}{(-\beta)^j} = \frac{1}{\beta+1}$, then the $(-\beta)$-expansion of $\frac{-\beta}{\beta+1}$ is $\overline{a_{[1,\ell)} ({a_\ell\!+\!1})}$, $a_1 a_2 \cdots$ is composed of blocks $a_{[1,\ell]} 0$ and $a_{[1,\ell)} ({a_\ell\!+\!1})$, and we have that $\overline{a_{[1,\ell)} ({a_\ell\!+\!1})} >_\mathrm{alt} u_1 u_2 \cdots$, thus \eqref{e:ak2} does not hold.
Therefore, \eqref{e:gora}, \eqref{e:LS}, \eqref{e:ak1}, and \eqref{e:ak2} imply that $a_1 a_2 \cdots$ is the $(-\beta)$-expansion of~$\frac{-\beta}{\beta+1}$ for some (unique) $\beta > 1$.

Suppose now that \eqref{e:ak1} does not hold, i.e., $a_1 a_2 \cdots \in \{a_{[1,k]},\, a_{[1,k)} ({a_k\!-\!1}) 0\}^\omega \setminus \{\overline{a_{[1,k]}}\}$ for some $k \ge 1$ with $\overline{a_{[1,k]}} >_\mathrm{alt} u_1 u_2 \cdots$.
We show that the sequence $\overline{a_{[1,k]}}$ satisfies~\eqref{e:gora}.
Suppose on the contrary that $a_{[j,k]}\, \overline{a_{[1,k]}} >_\mathrm{alt} \overline{a_{[1,k]}}$ for some $2 \le j \le k$. 
This implies that $a_{[j,k]}\, a_{[1,j)} >_\mathrm{alt} a_{[1,k]}$.
Since $a_{[k+1,2k)} = a_{[1,k)}$, we obtain that $a_{[j,j+k)} = a_{[j,k]}\, a_{[1,j)} >_\mathrm{alt} a_{[1,k]}$, thus $a_j a_{j+1} \cdots >_\mathrm{alt} a_1 a_2 \cdots$, contradicting that $a_1 a_2 \cdots$ satisfies~\eqref{e:gora}.
Therefore, $\overline{a_{[1,k]}}$ satisfies~\eqref{e:gora} and~\eqref{e:LS}, and we can apply Theorem~\ref{t:gora} for this sequence.
Let $\beta' > 1$ be the number satisfying \eqref{e:closedinterval} for the sequence~$\overline{a_{[1,k]}}$.
Then $\beta'$ also satisfies \eqref{e:closedinterval} for the original sequence $a_1 a_2 \cdots$, thus $\beta' = \beta$.
Therefore, $a_1 a_2 \cdots$ is not the $(-\beta)$-expansion of~$\frac{-\beta}{\beta+1}$.  

Suppose finally that \eqref{e:ak2} does not hold, i.e., $a_1 a_2 \cdots \in \{a_{[1,k]} 0,\, a_{[1,k)} ({a_k\!+\!1})\}^\omega$ for some $k \ge 1$ with $\overline{a_{[1,k)} ({a_k\!+\!1})} >_\mathrm{alt} u_1 u_2 \cdots$.
If $a_1 a_2 \cdots = \overline{a_{[1,k]} 0}$, then $\sum_{j=1}^\infty \frac{a_{k+j}}{(-\beta)^j} = \frac{1}{\beta+1}$, thus $a_1 a_2 \cdots$ is not the $(-\beta)$-expansion of~$\frac{-\beta}{\beta+1}$.
If $a_1 a_2 \cdots \ne \overline{a_{[1,k]} 0}$, then we show that the sequence $\overline{a_{[1,k)} ({a_k\!+\!1})}$ satisfies~\eqref{e:gora}.
Suppose that $a_{[j,k)} ({a_k\!+\!1})\, \overline{a_{[1,k)} ({a_k\!+\!1})} >_\mathrm{alt} \overline{a_{[1,k)} ({a_k\!+\!1})}$ for some $2 \le j \le k$. 
This implies that $a_{[j,k)} ({a_k\!+\!1}) a_{[1,j)} >_\mathrm{alt} a_{[1,k]}$.
Since $a_{[j,k)} ({a_k\!+\!1}) a_{[1,j)} = a_{[\ell,\ell+k)}$ for some $\ell \ge 2$, we have that $a_\ell a_{\ell+1} \cdots >_\mathrm{alt} a_1 a_2 \cdots$, contradicting that $a_1 a_2 \cdots$ satisfies~\eqref{e:gora}.
As in the preceding paragraph, the number given by Theorem~\ref{t:gora} for the sequence $\overline{a_{[1,k)} ({a_k\!+\!1})}$ is~$\beta$, thus $a_1 a_2 \cdots$ is not the $(-\beta)$-expansion of~$\frac{-\beta}{\beta+1}$.  
Therefore, \eqref{e:ak1} and \eqref{e:ak2} are necessary for $a_1 a_2 \cdots$ to be the $(-\beta)$-expansion of~$\frac{-\beta}{\beta+1}$ for some $\beta > 1$.

\section*{Acknowledgments}
The author would like to thank Edita Pelantov\'a for many fruitful discussions.

\bibliographystyle{amsalpha}
\bibliography{yrrap}
\end{document}